\newtheorem*{lema}{Lemma}
\newtheorem*{thma}{Theorem}
\theoremstyle{definition}
\theoremstyle{remark}
\begin{document}

\parskip=10pt

\flushbottom 

\title{Laminating lattices with symmetrical glue} 

\author{Veit Elser and Simon Gravel\\ Laboratory of Atomic and Solid State Physics\\
Cornell University, Ithaca, NY 14853-2501} 

\date{}

\maketitle

\begin{abstract}
We use the automorphism group $Aut(H)$, of holes in the lattice $L_8=A_2\oplus A_2\oplus D_4$, as the starting point in the construction of sphere packings in 10 and 12 dimensions. A second lattice,  $L_4=A_2\oplus A_2$, enters the construction because a subgroup of $Aut(L_4)$ is isomorphic to $Aut(H)$. The lattices $L_8$ and $L_4$, when glued together through this relationship, provide an alternative construction of the laminated lattice in twelve dimensions with kissing number 648. More interestingly, the action of $Aut(H)$ on $L_4$ defines a pair of invariant planes through which dense, non-lattice packings in 10 dimensions can be constructed. The most symmetric of these is aperiodic with center density $1/32$. These constructions were prompted by an unexpected arrangement of 378 kissing spheres discovered by a search algorithm.
\end{abstract}

\subsection*{Introduction}

One of the simplest geometrical constraint problems with strong ties to the design of codes is the problem of \textit{kissing spheres} \cite{SPLAG}. The object is to pack equal spheres --- as many as possible --- so that each sphere touches a given sphere of the same size. The best lower bounds on the maximum number of kissing spheres, in all dimensions where this problem has been studied, are derived from integral lattices or error correcting codes. Exceptionally symmetric lattices, such as $E_8$ and the Leech lattice, account for the relatively few dimensions where the maximum kissing number has been established.
Conversely, searches for good solutions to the kissing spheres problem by an unbiased algorithm tests the scope of the known schemes for designing good codes.

\newpage

This paper comes in response to some unexpected results in the search for kissing spheres in 10 dimensions \cite{D&C}. The algorithm used in the search worked directly with the two kinds of distance constraints on the sphere centers, $x_i$ : 
\[
\qquad \|x_i\|=2, \qquad\qquad \|x_i-x_j\|\ge 2, \;i\ne j\;.
\]
The only assumption that restricted the search was inversion symmetry (the operation $x_i\to -x_i$ merely interchanges pairs of spheres). A rough characterization of a solution is given by the distribution of the cosines of the angles subtended by pairs of spheres, $4\cos{(\theta_{ij})}=x_i\cdot x_j$.
Because of inversion symmetry the cosine distribution is symmetric about zero. 

\begin{figure}[t]
\includegraphics[width=5.in]{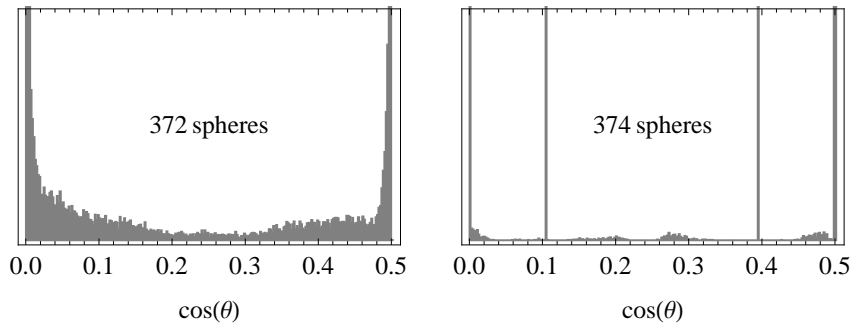}
\caption{Results of a numerical search for kissing spheres in 10 dimensions. Shown is the distribution of cosines of the angles subtended by pairs of spheres. The distribution for 374 spheres has a pair of sharp peaks at $\cos{(\theta)}=(3\pm\sqrt{3})/12$.}
\end{figure}

For small numbers of spheres there is considerable freedom of movement in the solution and the cosine distribution looks quasi-continuous. Figure 1 (left) shows a typical output produced by the algorithm for 372 spheres. The number 372 is of interest because this is the maximum kissing number of the densest known packing in ten dimensions, $P_{10c}$ \cite{SPLAG}. The densest known lattice packing has kissing number 336. Because there exists a non-lattice packing with kissing number 500 ($P_{10b}$), the quasi-continuous distribution found for 372 spheres is not in itself surprising. What came as a surprise was the distribution obtained for 374 spheres. There the appearance of sharp peaks, Fig. 1 (right), shows that the sphere arrangement has taken a very ordered form. Indeed, with the exception of a very small number of spheres, the cosines found by the algorithm are (within numerical precision) all contained in the set
$C=\{0,1/2, (3\pm\sqrt{3})/12\}$. The irrational values in this set are at odds with the simple rational values generated by integral lattices and error correcting codes.

A forensic examination of the sphere centers obtained by the algorithm has revealed that there is a maximal set of 378 spheres consistent with the cosines $C$. This was accomplished by first finding a suitable coordinate system that reproduced the cosines $C$ when the numerical coordinates were approximated by numbers of the form $a+b\sqrt{3}$, $a,b\in\mathbb{Q}$. A basis reduction algorithm was then used to establish that the 378 sphere centers form a $\mathbb{Z}$-module of rank 12. 

The main outcome of the forensic examination, and the theme of this paper, is a motive for the arrangement of the 378 kissing spheres. In rough outline, we first observe that a dense 12 dimensional lattice $L_{12}$ can be constructed by gluing together a pair of not exceptionally dense lattices $L_8$ and $L_4$ in eight and four dimensions. This gluing is the key to the construction of a dense non-lattice packing $Q_{10}$ in ten dimensions. The reduction in dimension occurs entirely within $L_4$, which possesses a symmetry that decomposes the four dimensional space into a pair of invariant planes. Moreover, as this symmetry is non-crystallographic in two dimensions, these planes are totally irrational subspaces with respect to $L_4$. The packing $Q_{10}$ results from the projection of $L_{12}$ into the orthogonal complement of one of these irrational planes and explains the rank-12 $\mathbb{Z}$-module discovered by the algorithm. What emerges as a reasonable motive is that the construction of $Q_{10}$, from the dense $L_{12}$, is such that it is able to preserve many of the sphere contacts of the lattice packing.

We will follow the standard terminology of lattices whenever possible \cite{SPLAG}. In particular, the \textit{norm} of a vector $x$ is its Euclidean inner product $x\cdot x$. The \textit{automorphism group} of a lattice, $Aut(\Lambda)$, is the group of unimodular transformations of the lattice generators that preserves the Euclidean inner product. Equivalently, $Aut(\Lambda)$ is the group of real orthogonal transformations on the lattice vectors that permutes the elements of $\Lambda$. We use the term \textit{glue group} for any subgroup of $\Lambda^\ast/\Lambda$,  where $\Lambda^\ast$ is the dual of $\Lambda$. The \textit{depth} of a general point $x$, in a lattice $\Lambda$, is the minimum norm in the set $\{x-y\colon y\in \Lambda, y\ne x\}$, and is denoted $\Delta(x)$. The number of minimum norm elements is written $\tau(x)$ and equals the kissing number of $\Lambda$ when $x\in\Lambda$.

\subsection*{Holes of the lattice $A_2\oplus A_2\oplus D_4$}

Glue groups  of particular interest are those generated by the vertices of the Voronoi region, the \textit{holes} of the lattice. Our construction begins with the lattice 
\[
L_8=A_2\oplus A_2\oplus D_4\;.
\]
$L_8$ has kissing number $6+6+24=36$ and for our choice of scale, minimum norm 4 and center density $\delta_8=(1/2\sqrt{3})^2(1/8)=1/96$. Each $A_2$ has two holes and the glue group $\mathbb{Z}_3$ has a single generator; $D_4$ has three holes and glue group $\mathbb{Z}_2\times \mathbb{Z}_2$ with two generators. The glue group $H$ generated by the holes of $L_8$ thus has four generators which, for later convenience, we represent as vectors under addition modulo 1:
\begin{equation}\label{glue_gen}
h_1=\begin{bmatrix}
\frac{1}{3}&\frac{1}{3}&\frac{1}{3}&\frac{1}{3}
\end{bmatrix}
\quad
h_2=\begin{bmatrix}
\frac{1}{3}&\frac{1}{3}&\frac{2}{3}&\frac{2}{3}
\end{bmatrix}
\end{equation}
\begin{equation*}
h_3=\begin{bmatrix}
\frac{1}{2}&0&\frac{1}{2}&0
\end{bmatrix}
\quad
h_4=\begin{bmatrix}
0&\frac{1}{2}&0&\frac{1}{2}
\end{bmatrix}
\end{equation*}
The generator $h_1$ represents a hole in one $A_2$ component while $h_2$ represents a hole in the other. All four elements, $\pm h_1$ and $\pm h_2$, thus have depth $4/3$ in $L_8$. Another four elements, $\pm h_1 \pm h_2$, represent holes in both $A_2$ components and have depth $8/3$. The vectors $h_3$, $h_4$, and $h_3+h_4$ represent the three holes in the $D_4$ component of $L_8$ with depth 2.

The 36 elements of $H$ fall into conjugacy classes with respect to $Aut(L_8)$. The quotient of $Aut(L_8)$, with respect to the normal subgroup that acts trivially on $H$, defines $Aut(H)$, the automorphism group of the holes. $Aut(H)$ is generated by (i) exchange of the two holes within either $A_2$ component, (ii) exchange of the two $A_2$ components, and (iii) any permutation of the three holes of $D_4$. The $2^2\times 2\times 3!$ elements of  $Aut(H)$ have an interesting representation in terms of $4\times 4$ matrices acting on the 4-component glue elements by right multiplication.
The matrices
\begin{equation*}
\sigma=
\begin{bmatrix}
1&0&0&0\\
0&1&0&0\\
0&0&-1&0\\
0&0&0&-1
\end{bmatrix}
\quad
\rho_4=
\begin{bmatrix}
0&0&-1&0\\
0&0&0&-1\\
1&0&0&0\\
0&1&0&0
\end{bmatrix}
\end{equation*}
together implement the exchange of $h_1$ and $h_2$, as well as sending these to their inverses (without affecting $h_3$ and $h_4$). Two other matrices generate the 6 permutations of the three holes of $D_4$:
\begin{equation*}
\rho_2=
\begin{bmatrix}
0&0&0&1\\
0&0&1&0\\
0&1&0&0\\
1&0&0&0
\end{bmatrix}
\quad
\rho_3=
\begin{bmatrix}
0&-1&0&0\\
1&-1&0&0\\
0&0&0&-1\\
0&0&1&-1
\end{bmatrix}.
\end{equation*}
Matrix $\rho_3$ cyclically permutes $h_3$, $h_4$ and $h_3+h_4$ (fixing $h_1$ and $h_2$),
while $\rho_2$ exchanges $h_3$ and $h_4$ (again with no effect on the $A_2$ holes when combined with an appropriate combination of $\sigma$ and $\rho_4$).

For future calculations we tabulate data on the 6 conjugacy classes of the $L_8$ glue elements in Table 1. As an example, the entry $h=\begin{bmatrix}
\frac{5}{6}&\frac{1}{3}&\frac{5}{6}&\frac{1}{3}
\end{bmatrix}=h_1+h_3$
is the combination of a hole in one $A_2$ component and a hole in $D_4$. There are $\tau_8(h)=3\times 8$ points in $L_8$ at the minimum squared distance $\Delta_8(h)=4/3 + 2$ from $h$. The symmetry orbit of $h$ has size 12 and is generated by $\rho_3 \rho_4$:
\begin{equation*}
\begin{bmatrix}
\frac{5}{6}&\frac{1}{3}&\frac{5}{6}&\frac{1}{3}
\end{bmatrix}
\quad
\begin{bmatrix}
\frac{1}{3}&\frac{5}{6}&\frac{2}{3}&\frac{1}{6}
\end{bmatrix}
\quad
\begin{bmatrix}
\frac{1}{6}&\frac{1}{6}&\frac{1}{6}&\frac{1}{6}
\end{bmatrix}
\quad
\begin{bmatrix}
\frac{1}{6}&\frac{2}{3}&\frac{5}{6}&\frac{1}{3}
\end{bmatrix}
\end{equation*}
\begin{equation*}
\begin{bmatrix}
\frac{1}{3}&\frac{5}{6}&\frac{1}{3}&\frac{5}{6}
\end{bmatrix}
\quad
\begin{bmatrix}
\frac{5}{6}&\frac{5}{6}&\frac{1}{6}&\frac{1}{6}
\end{bmatrix}
\quad
\begin{bmatrix}
\frac{1}{6}&\frac{2}{3}&\frac{1}{6}&\frac{2}{3}
\end{bmatrix}
\quad
\begin{bmatrix}
\frac{2}{3}&\frac{1}{6}&\frac{1}{3}&\frac{5}{6}
\end{bmatrix}
\end{equation*}
\begin{equation*}
\begin{bmatrix}
\frac{5}{6}&\frac{5}{6}&\frac{5}{6}&\frac{5}{6}
\end{bmatrix}
\quad
\begin{bmatrix}
\frac{5}{6}&\frac{1}{3}&\frac{1}{6}&\frac{2}{3}
\end{bmatrix}
\quad
\begin{bmatrix}
\frac{2}{3}&\frac{1}{6}&\frac{2}{3}&\frac{1}{6}
\end{bmatrix}
\quad
\begin{bmatrix}
\frac{1}{6}&\frac{1}{6}&\frac{5}{6}&\frac{5}{6}
\end{bmatrix}
\end{equation*}

\renewcommand\arraystretch{1.5}
\begin{table}
\begin{center}
\begin{tabular}[top]{cccc}
$h$ & orbit size & $\Delta_8(h)$ & $\tau_8(h)$\\
\hline
$\begin{matrix}
0&0&0&0
\end{matrix}$ & 1 & 4 & 36\\
$\begin{matrix}
\frac{1}{2}&0&\frac{1}{2}&0
\end{matrix}$ & 3 & 2 & 8\\
$\begin{matrix}
\frac{1}{3}&\frac{1}{3}&\frac{1}{3}&\frac{1}{3}
\end{matrix}$ & 4 & 4/3 & 3\\
$\begin{matrix}
\frac{1}{3}&\frac{1}{3}&0&0
\end{matrix}$ & 4 & 8/3 & 9\\
$\begin{matrix}
\frac{5}{6}&\frac{1}{3}&\frac{5}{6}&\frac{1}{3}
\end{matrix}$ & 12 & 10/3 & 24\\
$\begin{matrix}
\frac{5}{6}&\frac{1}{3}&\frac{1}{2}&0
\end{matrix}$ & 12 & 14/3 & 72
\end{tabular}
\caption{Properties of the symmetry orbits of glue in $L_8$. The orbit representatives $h$ are sums (mod 1) of the generators \eqref{glue_gen}; $\Delta_8(h)$ is the depth and $\tau_8(h)$ the number of points in $L_8$ at squared distance $\Delta_8(h)$.}
\end{center}
\end{table}

\subsection*{Symmetric glue for the lattice $A_2\oplus A_2$}

We now interpret the 36 glue elements, when expressed as 4-component vectors, as glue elements of a four dimensional lattice $L_4$. Since we want the glue in $L_4$ to include all the automorphisms the corresponding elements have in $L_8$, we construct $L_4$ so the $4\times 4$ matrices  $\rho_2$, $\rho_3$, $\rho_4$, and $\sigma$ are elements of $Aut(L_4)$. These matrices act on the four generators of $L_4$ by left multiplication. 

We exhibit $L_4$ in terms of the projections of its generators $u_1$, $u_2$, $u_3$, $u_4$ in two orthogonal planes. These are shown in Figure 2; components in the two planes are distinguished by superscripts ${\scriptstyle ||}$ and ${\scriptstyle \perp}$. The pair $u^{\scriptscriptstyle ||}_1, u^{\scriptscriptstyle ||}_2$ generates an $A_2$ lattice, as does the pair $u^{\scriptscriptstyle ||}_3, u^{\scriptscriptstyle ||}_4$. The same holds for the projection into the other plane. This much is consistent with the automorphism generated by $\rho_3$, which acts as a rotation by $120^\circ$ in the two planes. The pair $u^{\scriptscriptstyle ||}_3, u^{\scriptscriptstyle ||}_4$ is rotated by $+90^\circ$ relative to $u^{\scriptscriptstyle ||}_1, u^{\scriptscriptstyle ||}_2$, and by $-90^\circ$ in the other plane. If we fix the coordinates so that $u^{\scriptscriptstyle ||}_1=u^{\scriptscriptstyle \perp}_1$ and $u^{\scriptscriptstyle ||}_2=u^{\scriptscriptstyle \perp}_2$, then this implies $u^{\scriptscriptstyle ||}_3=-u^{\scriptscriptstyle \perp}_3$ and $u^{\scriptscriptstyle ||}_4=-u^{\scriptscriptstyle \perp}_4$. From this we have \begin{equation*}
u_i\cdot u_j=(u^{\scriptscriptstyle ||}_i+u^{\scriptscriptstyle \perp}_i)\cdot (u^{\scriptscriptstyle ||}_j+u^{\scriptscriptstyle \perp}_j)=0\qquad i\in\{1,2\},\; j\in\{3,4\}
\end{equation*}
thus identifying $L_4$ as $A_2\oplus A_2$. We choose the scale so that $L_4$ has minimal norm 4 and center density $\delta_4=1/12$. The actions of $\rho_2$ and $\rho_4$ on the projected lattice generators show that these also generate isometries of the two planes: $\rho_2$ reflects across a line and $\rho_4$ rotates by $90^\circ$. Altogether, $\rho_2$, $\rho_3$, $\rho_4$ generate the two-dimensional non-crystallographic reflection group $G_0$ of order 24. The final generator of $Aut(H)$, the involution $\sigma$, reverses the signs of $u_3$ and $u_4$ relative to $u_1$ and $u_2$, and is therefore conjugate to an exchange of the two planes. We observe that the only point of $L_4$ contained in either plane is the origin.

\begin{figure}[t]
\begin{center}
\includegraphics[width=5.in]{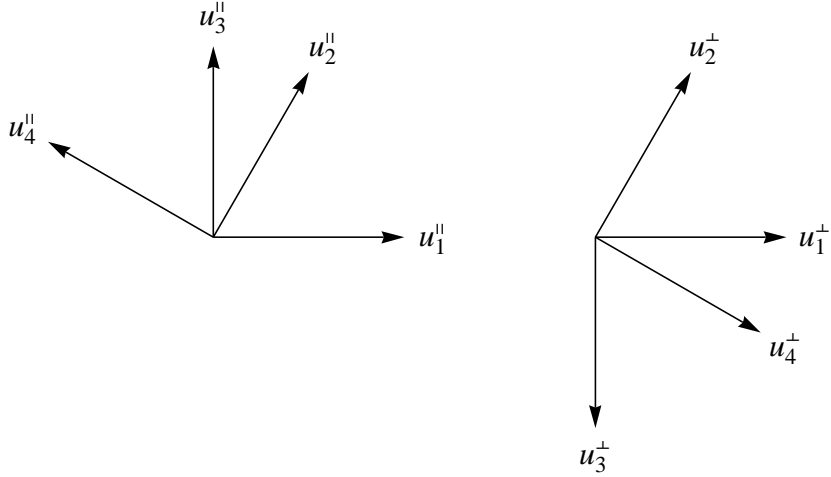}
\caption{Projections of the generators of $L_4$ into two orthogonal planes.}
\end{center}
\end{figure}

The data for the glue elements, as for $L_8$, again condenses into a table of the symmetry orbits. As a guide for the computations in Table 2, we show in Figure 3 the components of the glue in the $A_2$ plane spanned by $u_1$ and $u_2$ (the diagram for the other $A_2$ component is identical). There are 12 distinct glue elements in this component, and these fall into four families:
\begin{equation*}
\begin{array}{ccccccc}
A:&\left[0\;0\right]&&&&&\\
B:&\left[\frac{1}{2}\;0\right]&\left[0\;\frac{1}{2}\right]&\left[\frac{1}{2}\;\frac{1}{2}\right]&&&\\
C:&\left[\frac{1}{3}\;\frac{1}{3}\right]&\left[\frac{2}{3}\;\frac{2}{3}\right]&&&&\\
D:&\left[\frac{1}{6}\;\frac{1}{6}\right]&\left[\frac{1}{6}\;\frac{2}{3}\right]&\left[\frac{2}{3}\;\frac{1}{6}\right]&\left[\frac{5}{6}\;\frac{1}{3}\right]&\left[\frac{1}{3}\;\frac{5}{6}\right]&\left[\frac{5}{6}\;\frac{5}{6}\right]\\
\end{array}
\end{equation*}
Glue in the family $B$ has depth 1 from 2 points of $A_2$, family $C$ has depth $4/3$ from 3 points, and family $D$ has depth $1/3$ from only one point of $A_2$. Taking into account both $A_2$ components we obtain the information in Table 2.

\begin{figure}[t]
\begin{center}
\includegraphics[width=3.in]{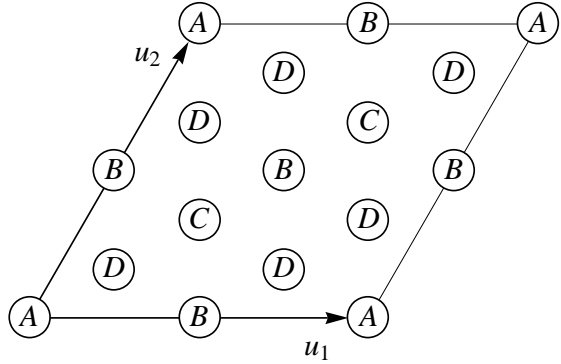}
\caption{The four types of glue in one $A_2$ component of the lattice $L_4$.
}
\end{center}
\end{figure}

\begin{table}
\begin{center}
\begin{tabular}[top]{cccc}
$h$ & orbit size & $\Delta_4(h)$ & $\tau_4(h)$\\
\hline
$\begin{matrix}
0&0&0&0
\end{matrix}$ & 1 & 4 & 12\\
$\begin{matrix}
\frac{1}{2}&0&\frac{1}{2}&0
\end{matrix}$ & 3 & 2 & 4\\
$\begin{matrix}
\frac{1}{3}&\frac{1}{3}&\frac{1}{3}&\frac{1}{3}
\end{matrix}$ & 4 & 8/3 & 9\\
$\begin{matrix}
\frac{1}{3}&\frac{1}{3}&0&0
\end{matrix}$ & 4 & 4/3 & 3\\
$\begin{matrix}
\frac{5}{6}&\frac{1}{3}&\frac{5}{6}&\frac{1}{3}
\end{matrix}$ & 12 & 2/3 & 1\\
$\begin{matrix}
\frac{5}{6}&\frac{1}{3}&\frac{1}{2}&0
\end{matrix}$ & 12 & 4/3 & 2
\end{tabular}
\caption{Properties of the symmetry orbits of glue in $L_4$. $\Delta_4(h)$ is the depth and $\tau_4(h)$ the number of points in $L_4$ at squared distance $\Delta_4(h)$.}
\end{center}
\end{table}

\subsection*{Twelve dimensional lattice packing}

For each element $h$ of the glue group $H$ we have a glue element $x_8(h)$ for the lattice $L_8$ and a corresponding glue element $x_4(h)$ for the lattice $L_4$. The elements $x_8(h)+x_4(h)$, $h\in H$, form a glue group for $L_8\oplus L_4$ and, together with $L_8\oplus L_4$, form a lattice $L_{12}$. From the data in Tables 1 and 2 we can check that $L_{12}$ has minimal norm 4 and compute the kissing number. 

Let $x$ and $y$ be distinct points of $L_{12}$. The vector $x-y$ belongs to one of the 36 conjugacy classes with respect to $L_8\oplus L_4$. A non-zero vector in the class $\left[0\;0\;0\;0\right]$ has minimal norm 4 because it must be a minimal vector of either $L_8$ or $L_4$; the number of such vectors is $36+12$, the sum of the kissing numbers of $L_8$ and $L_4$. If the conjugacy class $h$ of $x-y$ is non-trivial, then its minimal norm is $\Delta_8(h)+\Delta_4(h)$ and the number of such vectors is $\tau_8(h) \tau_4(h)$. Computations for all the conjugacy classes are given in Table 3 and verify that $L_{12}$ has minimal norm 4 and kissing number $48+96+108+108+288=648$. The center density of this lattice is $\delta_{12}=|H|\,\delta_8\, \delta_4=1/32$. These properties match those of the 12-dimensional laminated lattice having the highest kissing number, $\Lambda_{12}^\mathrm{max}$ \cite{SPLAG}. The latter is one of three, equal density lattices produced by laminating one dimension at a time, starting with $\Lambda_1=2\mathbb{Z}$ and always maximizing the density. $L_{12}$ and $\Lambda_{12}^\mathrm{max}$ were shown to be isomorphic by comparing their Gram matrices \cite{Sloane}.

\begin{table}
\begin{center}
\begin{tabular}[top]{ccccc}
$h$ & orbit size & $\Delta_8(h)+\Delta_4(h)$ & $\tau_8(h) \tau_4(h)$& number\\
\hline
$\begin{matrix}
\frac{1}{2}&0&\frac{1}{2}&0
\end{matrix}$ & 3 & 4 & 32 & 96\\
$\begin{matrix}
\frac{1}{3}&\frac{1}{3}&\frac{1}{3}&\frac{1}{3}
\end{matrix}$ & 4 & 4 & 27 & 108\\
$\begin{matrix}
\frac{1}{3}&\frac{1}{3}&0&0
\end{matrix}$ & 4 & 4 & 27 & 108\\
$\begin{matrix}
\frac{5}{6}&\frac{1}{3}&\frac{5}{6}&\frac{1}{3}
\end{matrix}$ & 12 & 4 & 24 & 288\\
$\begin{matrix}
\frac{5}{6}&\frac{1}{3}&\frac{1}{2}&0
\end{matrix}$ & 12 & 6 & 144 & 1728
\end{tabular}
\caption{The short vectors of $L_{12}$ in the non-trivial conjugacy classes of the glue group. }
\end{center}
\end{table}

\subsection*{Ten dimensional non-lattice packings}

Our non-standard construction of $\Lambda_{12}^\mathrm{max}\cong L_{12}$ leads rather directly to a non-lattice packing that explains the numerical kissing number results in 10 dimensions. The construction of the non-lattice  packing is closely related to quasicrystal patterns, of which the best known example is the Penrose tiling. The focus now shifts to the Euclidean space $X_4$ that contains $L_4$. The invariant planes of the the group $G_0$ provide a natural orthogonal decomposition $X_4=X_{||}\oplus X_\perp$. A general point $x\in L_{12}$ has a unique expression of the form $x=x_{||}+x_\perp+x_8$, where $x_8$ is in the space containing $L_8$. The only role of $x_8$ in the following construction is through its depth in $L_8$, which we denote by $\Delta_8(x)$; the depth depends only on the associated glue element.
The non-lattice packing is then obtained as follows:

\noindent\textbf{Construction Q}. Define the subset $S_{10}=\{x\in L_{12}\colon x_\perp\in V_\perp\}$, where $V_\perp\subset X_\perp$ is a bounded domain and specified in detail below. From $S_{10}$, a packing in 10 dimensions is given by the set $Q_{10}=\{\sqrt{2}\, x_{||}+x_8\colon x\in S_{10}\}$.

We will see that if there are kissing spheres at $x, y\in L_{12}$, that is, $z=x-y$ is a vector of norm 4, then very often $z_{||}\cdot z_{||} = z_\perp\cdot z_\perp$. Since this implies that $z_{||}+z_\perp+z_8$ and $\sqrt{2}\,z_{||}+z_8$ have the same norm, spheres that are kissing in $L_{12}$ will often have counterparts in $Q_{10}$ that are kissing as well. Note that if $z$ is in the conjugacy class  $\left[\frac{5}{6}\;\frac{1}{3}\;\frac{1}{2}\;0\right]$, then $(\sqrt{2}\,z_{||}+z_8)\cdot (\sqrt{2}\,z_{||}+z_8)\ge\Delta_8(z)=14/3>4$, and the packing constraint is satisfied for any $z_{||}$. We derive below a domain $V_\perp$ for which we can prove that $Q_{10}$ is a packing with minimal distance 2.

As a preliminary step we identify and characterize the minimal vectors of $L_{12}$ that arise in construction Q:
\begin{lema}
A non-zero $x\in L_{12}$ that satisfies the inequalities
\begin{align}
\Delta_8(x)+2\,x_{||}\cdot x_{||}&< 4\\
x_\perp\cdot x_\perp&\le 8/3
\end{align}
is a minimal vector of $L_{12}$.
\end{lema}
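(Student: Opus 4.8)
The plan is to reduce the claim to a short, explicit case analysis over the glue classes of $H$. Write $x=x_{||}+x_\perp+x_8$, set $p=x_{||}\cdot x_{||}$ and $q=x_\perp\cdot x_\perp$, and observe that $x_{||}+x_\perp$ is the $L_4$-part $x_4$ of $x$, so that $x\cdot x=x_8\cdot x_8+p+q$, where $x_8\cdot x_8$ is a norm realized in the $L_8$-coset of $x$ and $p+q=x_4\cdot x_4$ is a norm realized in the $L_4$-coset of $x$, the two cosets carrying one common glue label $h\in H$. Since $L_{12}$ has minimal norm $4$ we already have $x\cdot x\ge 4$, so it suffices to force $x_8\cdot x_8=\Delta_8(h)$ and $x_4\cdot x_4=\Delta_4(h)$; by Table 3, $\Delta_8(h)+\Delta_4(h)=4$ for every class in which this can occur, which then gives $x\cdot x=4$.

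First I would use inequality (1) to cut down $h$: because $x_8\cdot x_8\ge\Delta_8(h)$ and $p\ge 0$, (1) forces $\Delta_8(h)<4$, which by Table 1 leaves only the four nontrivial orbits with $\Delta_8(h)\in\{4/3,\,2,\,8/3,\,10/3\}$ (the trivial class has $\Delta_8=4$ and the sixth orbit has $\Delta_8=14/3$, both excluded). For each of these classes I would read $\Delta_4(h)$ off Table 2 and, from the hole geometry in Figures 2 and 3, tabulate the first two shells of the relevant $L_4$-coset together with the decomposition of its low-norm vectors into $X_{||}$- and $X_\perp$-parts. The structural input that makes this manageable is the mirror symmetry $\sigma$, which is conjugate to the exchange of the two planes: a vector whose $L_4$-part lies in a single $A_2$ summand has $p=q$ (since $u_i^{||}=u_i^\perp$ in one summand and $u_i^{||}=-u_i^\perp$ in the other), and mixing the two summands is exactly what introduces the irrational ($\sqrt3$) cross terms visible in the angles of Figure 2. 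In particular every minimum-norm representative of each of the four cosets has $p=q=\tfrac12\Delta_4(h)$, so that the left-hand side of (1) there equals $\Delta_8(h)+\Delta_4(h)=4$ and (1) is violated; hence $x_4\cdot x_4$ must actually lie in the second shell or higher.

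The heart of the argument, which I expect to be the main obstacle, is the class-by-class squeeze that follows. Writing $s_1(h)$ for the second-smallest norm in the $L_4$-coset, the previous step gives $p+q\ge s_1(h)$; combined with (2) this yields $p\ge s_1(h)-8/3$, while (1) gives $p<\tfrac12\bigl(4-\Delta_8(h)\bigr)$. One must then check, for each $h$, that these two bounds on $p$ are compatible only in the degenerate situation $x_4\cdot x_4=\Delta_4(h)$, $x_8\cdot x_8=\Delta_8(h)$ (i.e. $x\cdot x=4$) — or are outright inconsistent, in which case the hypothesis is empty and the statement holds trivially. This uses that the shells of the $A_2$- and $D_4$-cosets occurring inside $L_8$ jump by multiples of the minimal norm $4$, and uses the finer $\mathbb{Z}[\sqrt3]$-valued spectrum of split norms in $L_4$ to eliminate the intermediate values of $x_4\cdot x_4$. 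The delicate parts are to pin down $s_1(h)$ for each of the four $L_4$-cosets correctly and to confirm that no low-shell $L_4$ representative has $p$ small enough to satisfy the strict inequality (1); the tightest case is $\Delta_8(h)=10/3$, where (1) allows slack only $1/3$ in $p$ and the coset bookkeeping (the family-$D$ contributions of Figure 3) is least forgiving. Once all four classes are disposed of, $x\cdot x=4$ and $x$ is a minimal vector of $L_{12}$.
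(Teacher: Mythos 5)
Your reduction to a case analysis over the four glue orbits with $\Delta_8<4$ is the right setting, but the pivot of your argument --- ``every minimum-norm representative of each of the four cosets has $p=q=\frac{1}{2}\Delta_4(h)$'' --- is false, and it fails exactly where the lemma has content. For the class $h=[\frac13\,\frac13\,\frac13\,\frac13]$ (the one with $\Delta_8=4/3$, $\Delta_4=8/3$) the $4\times 9$ minimal vectors split into three $G_0$-orbits, and Table~4 of the paper records that two of these have $(p,q)=\bigl(\frac43\mp\frac{2}{\sqrt3},\,\frac43\pm\frac{2}{\sqrt3}\bigr)$: the involution $\sigma$ permutes those two orbits rather than fixing each, so your ``single $A_2$ summand'' heuristic does not cover minimal representatives that mix the two summands. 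The twelve vectors with $p=\frac43-\frac{2}{\sqrt3}$ satisfy both hypotheses (indeed $\frac43+2p=4-\frac{4}{\sqrt3}<4$ and $q=\frac43+\frac{2}{\sqrt3}\le\frac83$), so the hypotheses of the lemma are \emph{not} empty for this class; these are precisely the ``forbidden vectors'' around which the domain $V_\perp$ is later designed. By asserting that every first-shell representative violates the first inequality and then squeezing out the higher shells, your argument is headed toward the conclusion that no $x$ satisfies the hypotheses at all, which is a false statement, so the proof cannot be completed along these lines. (Separately, you note that one must force $x_8\cdot x_8=\Delta_8(h)$, but neither the hypotheses nor your subsequent steps ever bound $x_8\cdot x_8$; like the paper, you have to read the $L_8$-component as implicitly a minimal coset representative.)

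There is a second, independent gap: for the class with $\Delta_8=10/3$ your squeeze does not close. The second shell of the $L_4$-coset of $[\frac16\,\frac16\,\frac16\,\frac16]$ sits at norm $s_1=8/3$, so your bounds read $p\ge s_1-8/3=0$ versus $p<1/3$, which excludes nothing. The case can only be settled by listing the coset vectors with $x_4\cdot x_4<3$ (there are five) and computing their $X_{||}$-norms, which for the non-minimal ones come out to $(4\pm\sqrt3)/3$, both incompatible with $p<1/3$; that explicit enumeration is the entire substance of the paper's proof of this case, and your appeal to ``the finer $\mathbb{Z}[\sqrt3]$-valued spectrum'' is a placeholder for it rather than an argument. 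Note finally that the paper dispatches the other three classes far more cheaply than any shell analysis: since $L_{12}\cong\Lambda_{12}^{\mathrm{max}}$ is integral with norms $4,6,8,\dots$, the two hypotheses combine to give $x\cdot x<14/3+\frac12\Delta_8(x)$, which is $<6$ for $\Delta_8\in\{4/3,\,2,\,8/3\}$ and hence forces $x\cdot x=4$ outright --- no knowledge of the $L_4$ shells, and in particular no $p=q$ claim, is needed there.
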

\begin{proof}
Combining (2) and (3) we obtain $x\cdot x< 14/3+\frac{1}{2}\Delta_8(x)$. Table 1, together with $\Delta_8(x)<4$ as implied by (2), limits the possible values of $\Delta_8$ in the bound on $x\cdot x$. Recalling that the possible norms of the integral lattice $L_{12}\cong\Lambda_{12}^\mathrm{max}$ are $4, 6, \ldots$, we see that only the case $\Delta_8(x)=10/3$ is unresolved. In this case (2) and (3) imply $x_{||}\cdot x_{||} < 1/3$ and $x_4\cdot x_4<3$, where $x_4=x_{||}+x_\perp$. Without loss of generality it is sufficient to examine just one element of the symmetry orbit of the glue, say $h=\left[\frac{1}{6}\;\frac{1}{6}\;\frac{1}{6}\;\frac{1}{6}\right]$. There are just five solutions to the inequality $x_4\cdot x_4=\| \sum_{i=1}^4\left(n_i+\frac{1}{6}\right)u_i\|^2<3$
for integers $n_i$. These are where all integers are zero, for which $x_4\cdot x_4=2/3$ and $x$ is minimal, or only one of the integers is nonzero and has the value $-1$. For the latter one obtains $x_{||}\cdot x_{||}=(4\pm\sqrt{3})/3$, both of which are inconsistent with $x_{||}\cdot x_{||} < 1/3$.
\end{proof}

The domain $V_\perp$ is constrained by the symmetry orbits of the $L_{12}$ minimal vectors, specifically their projections into $X_\perp$. Since $G_0$ acts as the reflection group of order 24 on the set of projected minimal vectors, the size of the orbits is always a multiple of 12 (since $x_\perp$ is non-zero for a minimal vector). Consulting Table 2 we see, for example, that there are $3\times 4$ minimal vectors with $x_4\cdot x_4=2$. This single orbit with respect to $G_0$ is also an orbit of $Aut(H)$ and is therefore fixed by the involution $\sigma$ that exchanges $X_{||}$ and $X_\perp$. We conclude that for minimal vectors $x$ in this conjugacy class we have $x_{||}\cdot x_{||}=x_\perp\cdot x_\perp$. The equality of projected minimal vector norms in $X_{||}$ and $X_\perp$ holds for all the classes except the class with $\Delta_8=14/3$, where this property will not be relevant, and the class with $\Delta_8=4/3$. The latter contains $4\times 9$ vectors which form three orbits with respect to $G_0$; data for these are given in Table 4.
\begin{table}
\begin{center}
\begin{tabular}[top]{rrrrccc}
\multicolumn{4}{c}{representative} &$\quad$& $x_{||}\cdot x_{||}$ & $x_\perp\cdot x_\perp$\\
\hline
$\frac{1}{3}$&$\frac{1}{3}$&$\frac{1}{3}$&$\frac{1}{3}$&
& $\frac{4}{3}$ & $\frac{4}{3}$\\
$\frac{1}{3}$&$\frac{1}{3}$&$-\frac{2}{3}$&$\frac{1}{3}$&
& $\frac{4}{3}-\frac{2}{\sqrt{3}}$& $\frac{4}{3}+\frac{2}{\sqrt{3}}$\\
$\frac{1}{3}$&$\frac{1}{3}$&$\frac{1}{3}$&$-\frac{2}{3}$&
& $\frac{4}{3}+\frac{2}{\sqrt{3}}$& $\frac{4}{3}-\frac{2}{\sqrt{3}}$\\
\end{tabular}
\caption{Data on the three orbits with respect to  the reflection group $G_0$ of the $4\times 9$ minimal vectors of $L_{12}$ with $\Delta_4=8/3$. Orbit representatives are specified in the same 4-component basis used to define the glue group.}
\end{center}
\end{table}

Only the second orbit in Table 4 requires attention, as it violates the property $\|\sqrt{2}\, x_{||}\|\ge \|x_{||}+x_\perp\|$. Avoiding such minimal vectors is one of the roles of the domain $V_\perp$. More specifically, we construct $V_\perp$ such that if $y_\perp, z_\perp\in V_\perp$, then $x_\perp=y_\perp-z_\perp$ is never one of the 12 \textit{forbidden} vectors in the orbit of $(u_1^\perp+u_2^\perp-2\, u_3^\perp+u_4^\perp)/3$.
Up to translation in $X_\perp$, there is a unique dodecagon with the property that its opposite edges are translated by vectors in this set. It is possible to include half the boundary of the dodecagon in the definition of $V_\perp$, and we make the definite choice shown in Figure 4 in what follows. $V_\perp$ has diameter $\sqrt{8/3}$, area $|V_\perp |=2$, and happens to coincide with the shadow of the Voronoi domain of $L_4=A_2\oplus A_2$, although the significance of this is not clear. 

\begin{figure}[t]
\begin{center}
\includegraphics[width=2.in]{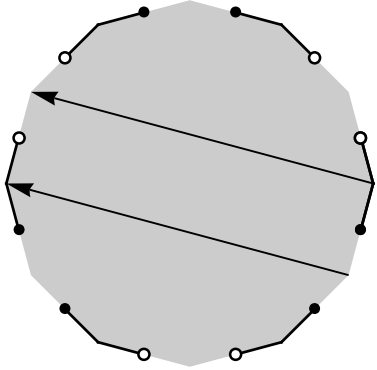}
\caption{The domain $V_\perp$, a regular dodecagon, used in construction Q. Translations associated with pairs of opposite edges (arrows) are the forbidden vectors of the construction. $V_\perp$ includes half the boundary (dark), and in particular, six dodecagon vertices.
}
\end{center}
\end{figure}

\begin{thma}
Construction Q, with $V_\perp$ as specified above, produces a sphere packing $Q_{10}$ with minimum distance 2.
\end{thma}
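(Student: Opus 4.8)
The plan is to reduce the minimum-distance claim to a finite check on the minimal vectors of $L_{12}$, using the Lemma. Let $x,y\in S_{10}$ be distinct, put $z=x-y$, a nonzero element of $L_{12}$, and note that since $x_\perp,y_\perp\in V_\perp$, a domain of diameter $\sqrt{8/3}$, we have $z_\perp\cdot z_\perp\le 8/3$. The images of $x$ and $y$ in $Q_{10}$ differ by $w=\sqrt{2}\,z_{||}+z_8$, and since $X_{||}$, $X_\perp$ and the space containing $L_8$ are mutually orthogonal,
\[
\|w\|^2=z_8\cdot z_8+2\,z_{||}\cdot z_{||}=z\cdot z+\bigl(z_{||}\cdot z_{||}-z_\perp\cdot z_\perp\bigr).
\]
If $z_{||}\cdot z_{||}\ge z_\perp\cdot z_\perp$ then $\|w\|^2\ge z\cdot z\ge 4$ and we are done, so suppose $z_{||}\cdot z_{||}<z_\perp\cdot z_\perp$ and, for contradiction, $\|w\|^2<4$. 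If the glue class of $z$ is non-trivial then $z_8\cdot z_8\ge\Delta_8(z)$, hence $\Delta_8(z)+2\,z_{||}\cdot z_{||}\le\|w\|^2<4$; with $z_\perp\cdot z_\perp\le 8/3$ this is precisely the hypothesis of the Lemma, so $z$ is a minimal vector of $L_{12}$. If the glue class is trivial then $z_8\in L_8$ cannot be nonzero (that would force $z_8\cdot z_8\ge 4$), so $z=z_4\in L_4$ and $z\cdot z=z_4\cdot z_4<2\,z_\perp\cdot z_\perp\le 16/3$, again forcing $z\cdot z=4$.

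It remains to show that no minimal vector $z$ of $L_{12}$ has both $z_{||}\cdot z_{||}<z_\perp\cdot z_\perp$ and $z_\perp$ equal to a difference of two points of $V_\perp$. I run through the six conjugacy classes of the glue group. In the trivial class a minimal vector of $L_{12}$ is either a minimal vector of $L_8$, so that $z_\perp=0$, or a minimal vector of $L_4$, so that $z_8=0$ and $z_{||}\cdot z_{||}=z_\perp\cdot z_\perp=2$ — the latter because such a vector lies in one $A_2$ summand of $L_4=A_2\oplus A_2$, and with the coordinates chosen so that $u_i^{||}=\pm u_i^\perp$ every vector of that summand has equal-norm projections into $X_{||}$ and $X_\perp$. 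For each of the classes $[\tfrac{1}{2}\,0\,\tfrac{1}{2}\,0]$, $[\tfrac{1}{3}\,\tfrac{1}{3}\,0\,0]$ and $[\tfrac{5}{6}\,\tfrac{1}{3}\,\tfrac{5}{6}\,\tfrac{1}{3}]$ the minimal vectors project into $X_\perp$ in a single twelve-element orbit of $G_0$; since $G_0$ preserves the norm within each plane while the involution $\sigma$ interchanges the two planes and fixes the orbit, every member has $z_{||}\cdot z_{||}=z_\perp\cdot z_\perp$ — this is the argument already given in the text for the orbit with $z_4\cdot z_4=2$. The class $[\tfrac{5}{6}\,\tfrac{1}{3}\,\tfrac{1}{2}\,0]$ has $\Delta_8+\Delta_4=6$ and therefore no minimal vectors. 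Only $[\tfrac{1}{3}\,\tfrac{1}{3}\,\tfrac{1}{3}\,\tfrac{1}{3}]$ remains: by Table 4 its minimal vectors split into three orbits of $G_0$, two of which satisfy $z_{||}\cdot z_{||}\ge z_\perp\cdot z_\perp$, while the third — the orbit of $(u_1^\perp+u_2^\perp-2u_3^\perp+u_4^\perp)/3$, with $z_\perp\cdot z_\perp=\tfrac{4}{3}+\tfrac{2}{\sqrt{3}}\le\tfrac{8}{3}$ — has $z_{||}\cdot z_{||}<z_\perp\cdot z_\perp$. But these twelve vectors are exactly the forbidden ones, which by the construction of $V_\perp$ are never of the form $x_\perp-y_\perp$ with $x_\perp,y_\perp\in V_\perp$. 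This contradicts the only surviving possibility, so $\|w\|^2\ge 4$ and $Q_{10}$ has minimum distance $2$.

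The real work is in the middle of the second paragraph: confirming, class by class against Table 3, that the minimal vectors of $L_{12}$ with $\|z_\perp\|>\|z_{||}\|$ are exactly the forbidden orbit. The two structural inputs are that $\sigma$ literally interchanges the squared norms of the two projections, and that for every glue class other than $[\tfrac{1}{3}\,\tfrac{1}{3}\,\tfrac{1}{3}\,\tfrac{1}{3}]$ the set of projected minimal vectors is a single $G_0$-orbit, hence of constant norm; I would verify these from the explicit generators $u_i$ and the glue cosets, the orbit sizes of Tables 1 and 2 being the bookkeeping that makes it finite. A secondary point requiring care is the existence and geometry of $V_\perp$ itself — that a regular dodecagon whose pairs of opposite edges are translated by the twelve forbidden vectors exists, that half of its boundary can be adjoined consistently, and that its diameter is $\sqrt{8/3}$: the bound $z_\perp\cdot z_\perp\le 8/3$ is what lets us invoke hypothesis $(3)$ of the Lemma, while $\tfrac{4}{3}+\tfrac{2}{\sqrt{3}}<\tfrac{8}{3}$ is precisely why a forbidden vector could otherwise arise as a difference and so genuinely must be excluded.
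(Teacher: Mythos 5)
Your proof is correct and follows essentially the same route as the paper: reduce $\|w\|^2<4$ to the hypotheses of the Lemma, conclude $z$ is minimal, and then check orbit by orbit that the only minimal vectors with $\|z_\perp\|>\|z_{||}\|$ are the forbidden ones excluded by the dodecagon $V_\perp$. Your explicit separate treatment of the trivial glue class (where $\Delta_8=4$ prevents a direct appeal to the Lemma) is a small point the paper glosses over, handled correctly here.
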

\begin{proof}
Consider $x, y\in S_{10}$ and let $z=x-y$. We wish to verify that $\|\sqrt{2}\,z_{||}+z_8\|\ge2$.  From the construction of $S_{10}$ we know that $z_\perp\cdot z_\perp\le 8/3$ (the diameter of $V_\perp$). The theorem is proved if we can show that $\|\sqrt{2}\,z_{||}+z_8\|<2$ leads to a contradiction. But with this statement both hypotheses of the lemma are satisfied, thereby establishing that $z$ is a minimal vector. From our exhaustive analysis of the minimal vectors we know (again omitting the irrelevant class with $\Delta_8=14/3>4$) that $\|\sqrt{2}\,z_{||}\|\ge\|z_{||}+z_\perp\|$ with one exception: the minimal vectors whose projections into $X_\perp$ coincide with one of the forbidden vectors. But $V_\perp$ was designed so this is impossible, hence $\|\sqrt{2}\,z_{||}\|\ge\|z_{||}+z_\perp\|$. Moreover, since $z$ is minimal, this shows $\|\sqrt{2}\,z_{||}+z_8\|\ge \|z_{||}+z_\perp+z_8\|=2$, a contradiction. 
\end{proof}

We know that the packing $Q_{10}$ is aperiodic because its projection into $X_{||}$ is a quasicrystal. A representative region of the projection is shown in Figure 5, where each point represents an $L_8$ lattice translated by one of 36 glue elements. To set the scale, the projections of various pairs of unit-radius spheres are also shown. Even though the sphere projections overlap, glue translations of the $L_8$ lattices guarantee that the spheres form a packing. There is a smallest separation of projected sphere centers, and spheres in this relationship amply satisfy the packing constraint because the associated conjugacy class of the gluing has $\Delta_8=14/3$. Many spheres are actually kissing; this is the case for those depicted in the figure.

\begin{figure}[t]
\begin{center}
\includegraphics[width=4.in]{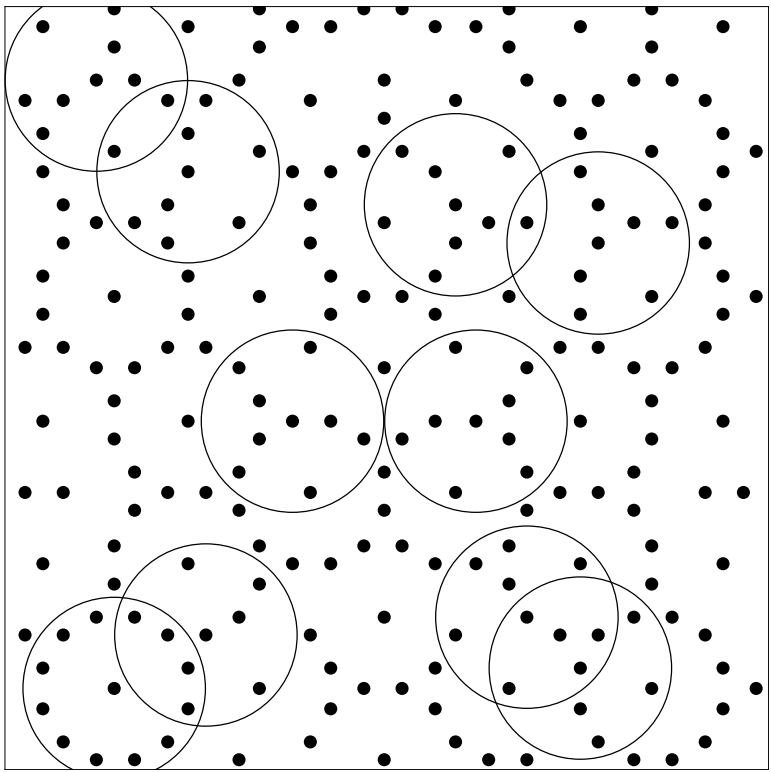}
\caption{Sphere centers (dots) of the packing $Q_{10}$ projected into the plane $X_{||}$ form an aperiodic pattern, a region of which is shown here. Pairs of kissing spheres in the 10 dimensional packing may have any of the five projections shown.
}
\end{center}
\end{figure}

The center density of $Q_{10}$ can be computed using a formula developed in the study of quasicrystals \cite{Elser1986, MoodyPatera}. Restricting to just one of the conjugacy classes of glue, and omitting the dilation by $\sqrt{2}$, the density of centers in $X_{||}$ is equal to the product of the center density of $L_4$ and the area of the domain $V_\perp$ whose shadow selects the subset of $L_4$ used in $Q_{10}$: $\delta_{||}=|V_\perp|\, \delta_4 $. Including all the conjugacy classes multiplies this by $|H|=36$ and the dilation by $\sqrt{2}$ diminishes the density by 2. To get the density in 10 dimensions this density in $X_{||}$ is multiplied by the center density of $L_8$:
\begin{equation*}
\delta_{10}=\frac{1}{2}|V_\perp|\,|H| \, \delta_4\, \delta_8=\frac{1}{2}\, |V_\perp|\, \delta_{12}=\frac{1}{32}.
\end{equation*}

\begin{figure}[t]
\begin{center}
\includegraphics[width=4.in]{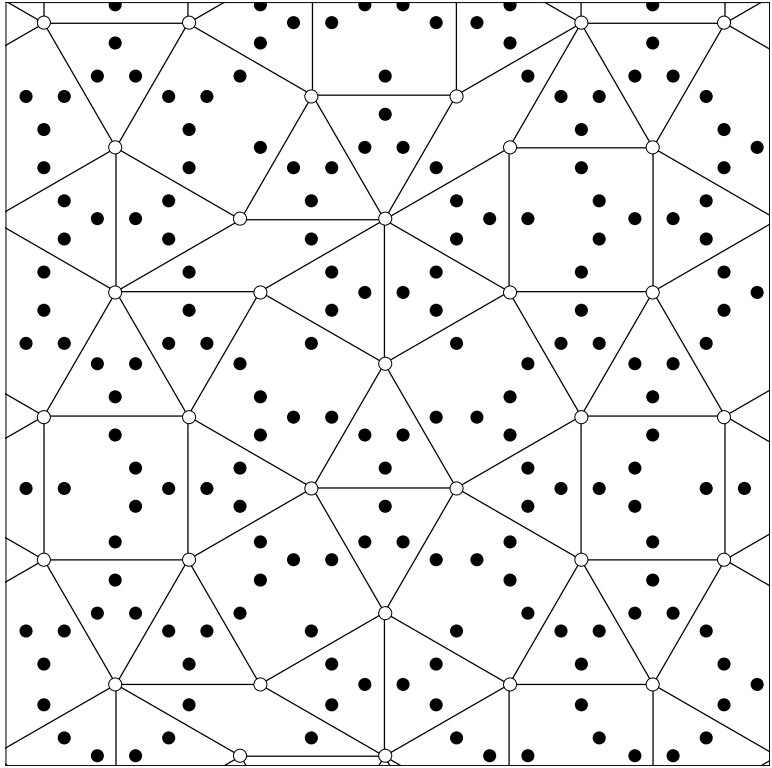}
\caption{The projected sphere centers in $X_{||}$ form a quasicrystal tiling comprising triangles, squares, and $30^\circ$ rhombi.
}
\end{center}
\end{figure}

Because $Q_{10}$ is a non-lattice packing, the sphere centers fall into distinct classes. This is evident in the quasicrystal pattern of projected centers in $X_{||}$, where there is a special class of centers that is never in the shortest-separation relationship with other centers. When centers in this special class are connected by edges, the result, shown in Figure 6, is a tiling of triangles, squares and $30^\circ$ rhombi. Alternate tilings formed by these three tiles, utilizing the same gluing scheme for the $L_8$ lattices, also correspond to valid packings (details omitted). The densest of these is obtained by the tiling that only uses the triangle, the densest tile of the three. The improvement of the density, to the value $7/(96+64\sqrt{3})\approx 0.0338$, comes at the expense of symmetry. Whereas the quasicrystal pattern has point symmetry group $G_0$ of order 24, the point group of the triangular tiling is only of order 12. The densest known packing in 10 dimensions has center density $5/128\approx 0.03906$ \cite{SPLAG}.

We conclude by returning to the results of the numerical experiment that prompted this investigation. The arrangement of 378 kissing spheres discovered by the search algorithm \cite{D&C} coincides with the arrangement obtained from the packing $Q_{10}$ when the dodecagonal domain $V_\perp$ is given a singular centering (translation) in $X_\perp$. At a singular centering one $L_8$ lattice projects to the exact center of $V_\perp$, thereby making the six vertices of $V_\perp$ available to the packing (see Fig. 4). Relative to the dodecagon center, the six vertices have glue in the orbit of $\left[\frac{1}{3}\;\frac{1}{3}\;0\;0\right]$ with $\Delta_8=8/3$ and $\Delta_4=4/3$. From Table 1 we see that $\tau_8=9$ spheres, in each of the $L_8$ lattices that project to these vertices of $V_\perp$, make contact with the central sphere. There is also a set of 12 $L_8$ lattices that project to a regular dodecagon within $V_\perp$. These are in the orbit of $\left[\frac{5}{6}\;\frac{1}{3}\;\frac{5}{6}\;\frac{1}{3}\right]$ with $\Delta_8=10/3$, $\Delta_4=2/3$ and $\tau_8=24$. The $X_{||}$ counterpart of this relationship are the many examples of complete dodecagons encircling tile vertices in Fig. 6.  Finally, the $L_8$ lattice that projects to the dodecagon center makes 36 contacts with the central sphere (the kissing number of $L_8$). The net kissing number is thus $6\times 9+12\times 24+36=378$.

\end{document}